\newtheorem{thm}{Theorem}       
       \newtheorem{coro}{Corollary}
\let\paragraph\subsection
\newcommand\scalemath[2]{\scalebox{#1}{\mbox{\ensuremath{\displaystyle #2}}}}
\title{The counting matrix of a simplicial complex}
\author{Oliver Knill} \date{7/21/2019}
\address{Department of Mathematics \\ Harvard University \\ Cambridge, MA, 02138 }
\begin{document}

\begin{abstract}
For a finite abstract simplicial complex $G$ with $n$ sets, define
the $n \times n$ matrix $K(x,y)=|W^-(x) \cap W^-(y)|$ which is the number
of subsimplices in $x \cap y$. We call it the counting matrix of $G$. 
Similarly as the connection matrix
$L$ which is $L(x,y)=1$ if $x$ and $y$ intersect and $0$ else, the counting matrix
$K$ is unimodular. Actually, $K$ is always in $SL(n,Z)$. The inverse of $K$ has the Green function
entries $K^{-1}(x,y)=\omega(x) \omega(y) |W^+(x) \cap W^+y|$, where $W^+(x)$ is
the star of $x$, the sets in $G$ which contain $x$. The matrix $K$ is always positive
definite. The spectra of $K$ and $K^{-1}$ always agree so that the matrix 
$Q=K-K^{-1}$ has the spectral symmetry $\sigma(Q)=-\sigma(Q)$ and 
the zeta function 
$\zeta(s) = \sum_{k=1}^n \lambda_k^{-s}$ defined by the eigenvalues $\lambda_k$ of $K$ 
satisfies the functional equation $\zeta(a+ib)=\zeta(-a+ib)$. 
The energy theorem in this case tells that
the total potential energy is $\sum_{x,y} K^{-1}(x,y)=|G|=\sum_x 1$ is
the number sets in $G$. In comparison, we had in the connection matrix case
the identity $\sum_{x,y} L^{-1}(x,y)=\chi(G)=\sum_x \omega(x)$. 
\end{abstract} 

\maketitle

\section{The results}

\paragraph{}
The category of {\bf finite abstract simplicial complexes} introduced by Dehn and Heegaard
\cite{DehnHeegaard,BurdeZieschang} 
is astonishingly rich \cite{AmazingWorld}, despite of the minimal axiomatic set-up:
$G$ is a finite set of sets closed under the operation of taking non-empty sub-sets. 
The structure is abundant as it appears for clique or independence complexes of graphs, 
order complexes of finite posets or  matroids without the empty set. Some authors like
\cite{JonssonSimplicial} include the empty set.
We use the more topological framework without empty set as used for example in 
\cite{FerrarioPiccinini,MunkresAlgebraicTopology}. 

\paragraph{}
The energy theorem for finite abstract simplicial complexes \cite{EnergyComplex}
states that the sum of the Green function entries of the matrix $L^{-1}(x,y)$ is the
{\bf Euler characteristic} $\chi(G) = \sum_x \omega(x)$, where 
$\omega(x) = (-1)^{|x|}$. The matrix entries of $L$ were defined as 
$L(x,y) = \chi( W^-(x) \cap W^-(y)))$, where $W^-(x)$ is the {\bf core} of $x$, 
the simplicial complex obtained as the set of sets contained 
in $x$. The inverse of $L$ has the matrix entries 
$\omega(x) \omega(y) \chi(W^+(x) \cap W^+(y))$, where $W^+(x)$ is the {\bf star}, the 
set of sets in $G$ containing $x$. These Green function entries can be seen as
a potential between $x$ and $y$. 

\paragraph{}
Similarly as when replacing the ``super counting" the permutations in Leibniz' definition of
the determinant to ``counting", leading to permanents, one can look what happens if one replaces 
the super-counting in the connection case with counting. Define the {\bf counting matrix}
$$  K(x,y) = |W^-(x) \cap W^-(y)| = 2^{|x \cap y|}-1   \; . $$  
Like the {\bf connection matrix} $L(x,y)$ of $G$, it is non-negative matrix defined by $G$. 
But it is no more a 0-1 matrix. While in the connection matrix case, 
${\rm det}(L)=\prod_{x \in G} \omega(x)$, we have now ${\rm det}(K) = \prod_{x \in G} 1 =1$
which is expressed in: 

\begin{thm}[Unimodularity]
$K \in SL(n,Z)$. 
\end{thm}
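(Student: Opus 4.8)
The plan is to exhibit $K$ as a Gram-type product $K = V V^{T}$ for an explicit integer matrix $V$ that is triangular with unit diagonal; then $\det(K) = \det(V)\det(V^{T}) = \det(V)^2 = 1$ drops out at once. The matrix $V$ I have in mind is the \emph{containment matrix} of the cores: index the rows and columns by the $n$ sets of $G$, and put $V(x,z) = 1$ if $z \in W^-(x)$, that is $z \subseteq x$, and $V(x,z) = 0$ otherwise. Because $G$ is closed under taking non-empty subsets, every non-empty $z \subseteq x$ already lies in $G$, so the entries of the $x$-th row of $V$ record precisely the members of the core $W^-(x)$.

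First I would verify the factorization. For any two sets $x,y \in G$,
$$ (V V^{T})(x,y) = \sum_{z \in G} V(x,z)\,V(y,z) = \bigl| \{ z \in G : z \subseteq x,\ z \subseteq y \} \bigr| = |W^-(x) \cap W^-(y)| = K(x,y), $$
since a non-empty $z$ lies in both cores exactly when $z \subseteq x \cap y$, and there are $2^{|x \cap y|} - 1$ such sets. This shows $K = V V^{T}$ with $V$ an integer (indeed $0$--$1$) matrix, so $K$ is integral and positive semidefinite; it will be positive definite once we know $V$ is invertible.

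Next I would make $V$ triangular by choosing a coordinate order. Order the sets of $G$ as $x_1, \dots, x_n$ so that the cardinalities are non-decreasing, $|x_i| \le |x_j|$ whenever $i \le j$ (ties broken arbitrarily). If $x_j \subseteq x_i$ with $x_j \neq x_i$, then $|x_j| < |x_i|$, which forces $j < i$; hence $V(x_i, x_j) \neq 0$ only for $j \le i$, while $V(x_i, x_i) = 1$. In this basis $V$ is lower triangular with every diagonal entry equal to $1$, so $\det(V) = 1$ and $V \in SL(n,\mathbb{Z})$. Combining this with the factorization gives $\det(K) = \det(V)^2 = 1$, and since $K$ is an integer matrix we conclude $K \in SL(n,\mathbb{Z})$.

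I do not anticipate a genuine obstacle; the only points needing care are that the sum in the factorization runs over all of $G$ (guaranteed by closure under subsets, so no core member is missed) and that the chosen order really forces triangularity. As a consistency check, this same factorization predicts the inverse announced in the abstract: by M\"obius inversion on the subset order one has $V^{-1}(x,z) = \omega(x)\omega(z)\,[z \subseteq x]$, and then $K^{-1} = (V^{-1})^{T} V^{-1}$ collapses, using $\omega(z)^2 = 1$, to the Green-function entries $K^{-1}(x,y) = \omega(x)\omega(y)\,|W^+(x) \cap W^+(y)|$. Finally, $\det(V)^2$ being a nonzero square fixes the sign, so the statement is $SL$ and not merely $GL$.
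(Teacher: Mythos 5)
Your proof is correct, and it takes a genuinely different route from the paper. The paper argues by induction in the larger category of discrete CW complexes: a new cell $x$ adds one row and column, the last column is deformed by a parameter $t$, and the determinant is shown to be the linear function $\det K(t)=|W^-(x)|-(|W^-(x)|-1)t$, which equals $1$ at $t=1$. Your argument instead exhibits the global factorization $K=VV^{T}$, where $V$ is the subset-containment ($0$--$1$) incidence matrix, and observes that ordering the sets by cardinality makes $V$ unipotent lower triangular, so $\det K=\det(V)^2=1$. The identity $(VV^T)(x,y)=|\{z\in G: z\subseteq x\cap y\}|=2^{|x\cap y|}-1$ does use closure of $G$ under non-empty subsets, exactly as you note, and your triangularity argument is airtight. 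Your route buys considerably more for the same price: positive definiteness of $K$ (the paper's Theorem 3) is immediate from $K=VV^T$ with $V$ invertible, and M\"obius inversion on the Boolean lattice gives $V^{-1}(x,z)=\omega(x)\omega(z)[z\subseteq x]$ and hence the Green--Star formula $K^{-1}(x,y)=\omega(x)\omega(y)|W^+(x)\cap W^+(y)|$ (the paper's Theorem 2) as a two-line corollary, whereas the paper only sketches both. What the paper's deformation argument buys is a uniform template that parallels the earlier connection-matrix proof and extends to the CW setting cell by cell; but note that your factorization also survives there, since $K(x,y)=|W^-(x)\cap W^-(y)|$ is still a count of common subcells and ordering cells by dimension still triangularizes $V$. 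In short, your proof is complete where the paper's is a sketch, and it unifies three of the paper's theorems.
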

\begin{proof}
(Sketch) Similarly as in the case of the matrix $L$, it is convenient to prove the result in 
a more general class of discrete CW complexes, where adding a new cell adds a single new 
row and column to the matrix $K$. This allows induction. If there $n$ cells in $G$ and the
new CW complex is $G+_A x$, then look at the matrix
$$ K(t) = \left[ \begin{array}{ccccccc}
    K_{11} & K_{12} &     . & . & . & K_{1n} & t K_{1,x} \\
    K_{21} & K_{22} &     . & . & . & .      & t K_{2,x} \\
      .    & .      &     . & . & . & .      & t K_{3,x} \\
      .    & .      &     . & . & . & .      & .         \\
      .    & .      &     . & . & . & .      & .         \\
      .    & .      &     . & . & . & .      & .         \\
    K_{n1} & .      &     . & . & . & K_{nn} & t K_{n,x} \\
    K_{x1} & K_{x2} &\dots  & \dots & \dots  & K_{xn} & |W^-(x)|  
            \end{array} \right]  \; .   $$
For $t=0$ this has by induction the determinant $|W^-(x)|$. 
For $t=1$, we have the counting matrix of $G+_A x$. Computing the 
determinant with a Laplace expansion with respect to the last column
gives a linear expression $at+b$ in $t$, where $b=|W^-(x)|$. 
The slope is $|W^-(x)|-(|W^-(x)|-1)$ because $A$ has $|W^-(x)|-1$ elements
which each contribute $1$ as one can see when taking the derivative with respect
to $t$ at $t=0$, where it is a determinant of a smaller counting matrix.
Alternatively, an adaptation of the proof given in \cite{MukherjeeBera2018} 
should go through. 
\end{proof} 

\paragraph{}
For a set $x \in G$, the {\bf star} $W^+(x)$ of $x$ is the set of simplices which contain $x$.
It is somehow dual to the {\bf core} $W^-(x)$ which is the set of simplices which are contained in $x$. 
But unlike the core, the star is in general not a simplicial complex. Still, one can count or 
super count elements in a set of sets. 
We have seen that $L^{-1}(x,y) = \omega(x) \omega(y) \chi(W^+(x) \cap W^+(y))$
relates the Green function entries with stars in $G$. 
There is an analogue formula for $K$:

\begin{thm}[Green-Star]
$K^{-1}(x,y) = \omega(x) \omega(y) |W^+(x) \cap W^+(y)|$. 
\end{thm}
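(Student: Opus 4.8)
The plan is to verify directly that the proposed matrix, call it $M$ with entries $M(x,y)=\omega(x)\omega(y)|W^+(x)\cap W^+(y)|$, satisfies $KM=I$; since $K$ is square, this identifies $M$ with $K^{-1}$. Rather than expand the product entrywise at once, I would first exhibit a factorization of $K$ that makes the inverse transparent. Introduce the $n\times n$ incidence matrix $U$ indexed by the sets of $G$, with $U(x,a)=[a\subseteq x]$, where $[P]$ denotes $1$ if the statement $P$ holds and $0$ otherwise; thus $U(x,a)$ records whether $a$ is a face of $x$. Because $|W^-(x)\cap W^-(y)|$ counts the non-empty sets $a$ that are faces of both $x$ and $y$, one has $K(x,y)=\sum_{a\in G}U(x,a)U(y,a)$, that is
\[ K = U U^{T}. \]

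Next I would invert $U$. Ordering the sets of $G$ by cardinality makes $U$ lower triangular with $1$'s on the diagonal, so $U\in SL(n,\mathbb{Z})$; this also re-proves Theorem 1, since $\det K=\det(U)^2=1$. I claim the inverse is the Möbius matrix of the inclusion poset, $U^{-1}(a,x)=\omega(a)\omega(x)$ if $x\subseteq a$ and $0$ otherwise. The verification rests on the single alternating-sum identity over a Boolean interval,
\[ \sum_{b \subseteq a \subseteq x} (-1)^{|a|} = (-1)^{|b|}\,[x=b], \]
which holds because $\sum_{S\subseteq x\setminus b}(-1)^{|S|}=(1-1)^{|x\setminus b|}$ vanishes unless $x=b$. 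This is exactly where the simplicial-complex hypothesis is needed: since $G$ is closed under taking non-empty subsets, every set $a$ with $b\subseteq a\subseteq x$ again lies in $G$, so the sum genuinely runs over a full Boolean interval and no terms are missing.

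Finally I would combine these. From $K=UU^{T}$ we get $K^{-1}=(U^{-1})^{T}U^{-1}$, and computing the $(x,y)$ entry,
\[ K^{-1}(x,y) = \sum_{a\in G} U^{-1}(a,x)\,U^{-1}(a,y) = \omega(x)\omega(y)\sum_{a\in G}\omega(a)^2\,[x\subseteq a]\,[y\subseteq a]. \]
Since $\omega(a)^2=1$, the remaining sum counts the sets $a\in G$ containing both $x$ and $y$, which is precisely $|W^+(x)\cap W^+(y)|$, giving the claimed formula. I expect the only real obstacle to be bookkeeping: applying the two Möbius/alternating-sum cancellations with the correct direction of inclusion, and confirming that $G$ being down-closed guarantees all intermediate faces remain in $G$ so that the Boolean-interval identity is valid. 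A direct verification of $\sum_z K(x,z)M(z,y)=\delta_{xy}$, collapsing first the sum over the middle index $z$ (which forces the two faces to coincide) and then the sum over faces (which forces $x=y$), would yield the same result and could be offered as an alternative, following the pattern used for the connection matrix $L$ in \cite{EnergyComplex}.
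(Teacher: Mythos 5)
Your proof is correct, and it takes a genuinely different route from the paper. The paper's argument (given only as a sketch) verifies $K^{-1}K=1$ directly, reducing the claim to two combinatorial sum identities — $\sum_{y}\omega(y)\,|W^+(x)\cap W^+(y)|\,|W^-(y)\cap W^-(x)|=\omega(x)$ and the corresponding off-diagonal vanishing — which it states but does not evaluate. You instead exhibit the integer Cholesky-type factorization $K=UU^{T}$ with $U(x,a)=[a\subseteq x]$ unipotent lower triangular in a cardinality ordering, invert $U$ by M\"obius inversion on Boolean intervals, and read off $K^{-1}=(U^{-1})^{T}U^{-1}$. Your route is more economical: the single identity $\sum_{b\subseteq a\subseteq x}(-1)^{|a|}=(-1)^{|b|}[x=b]$ does all the work, and the factorization yields Theorem 1 (unimodularity, $\det K=\det(U)^2=1$) and Theorem 3 (positive definiteness, $v^{T}Kv=\|U^{T}v\|^2>0$) as immediate corollaries, whereas the paper proves those separately by a deformation/induction argument over CW attachments. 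You also correctly isolate where the simplicial hypothesis enters: the down-closedness of $G$ is what makes the M\"obius sum range over a full Boolean interval. The one thing the paper's approach buys that yours does not is generality — the induction over cell attachments is designed to work for the broader class of discrete CW complexes, where cores need not be Boolean intervals and your $U^{-1}$ would no longer have the stated form; for the Green--Star formula as stated for simplicial complexes this costs you nothing.
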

\begin{proof}
(Sketch) 
To prove the identity $K^{-1} K = 1$, one has to check two cases: \\
a) given $x \in G$, the sum
$$ \sum_{y \in G} \omega(y) |W^+(x) \cap W^+(y)| |W^-(y) \cap W^-(x)| = \omega(x) \; . $$
This means that
$$ \sum_{y \in G} \sum_{u, x \cup y \subset u} \omega(y) (2^{|x \cap y|}-1) = \omega(x) \; . $$
b) given two different sets $x,z$ in $G$, then
$$ \sum_{y \in G} \omega(y) |W^+(x) \cap W^+(y)| |W^-(y) \cap W^-(z)| = 0 \;  $$
which means 
$$ \sum_{y \in G} \sum_{u, x \cup y \subset u} \omega(y) (2^{|x \cap z|}-1) = 0 \; . $$
\end{proof}

\paragraph{}
Especially, the self interaction energy of a simplex $x$ is $K^{-1}(x,x)=|W^+(x)|$ which 
is the cardinality of the star of $x$. In the connection case, we had 
$L^{-1}(x,x) = \chi(W^+(x))$, the Euler characteristic of the star of $x$. 

\paragraph{}
In the connection case $L$, the eigenvalues had both positive and negative parts and
$\chi(G)$ was the number of positive eigenvalues minus the number of negative eigenvalues. 
This still holds for $K$, but all eigenvalues of $K$ are now positive:

\begin{thm}[Positive definite]
The matrix $K$ is positive definite.
\end{thm}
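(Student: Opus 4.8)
The plan is to exhibit $K$ as a Gram matrix. For two simplices $x,z \in G$ write $M(x,z)=1$ if $z \subseteq x$ and $M(x,z)=0$ otherwise; this is the zeta matrix of the inclusion order on $G$, whose $x$-th row is the indicator vector $v_x$ of the core $W^-(x)$. Since $W^-(x) \cap W^-(y)$ is exactly the set of simplices $z$ that are subsets of both $x$ and $y$, I would first compute
$$ (M M^T)(x,y) = \sum_{z \in G} M(x,z) M(y,z) = |\{ z \in G : z \subseteq x \text{ and } z \subseteq y \}| = |W^-(x) \cap W^-(y)| = K(x,y), $$
so that $K = M M^T$.

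Next I would show $M$ is invertible over the integers. Ordering the $n$ simplices of $G$ by a linear extension of inclusion (for instance by increasing cardinality, breaking ties arbitrarily), the condition $z \subseteq x$ forces $z$ to appear no later than $x$, so $M$ is triangular in this ordering, and its diagonal entries $M(x,x)=1$ are all one. Hence $\det M = 1$ and $M \in SL(n,Z)$; in particular $M$ and $M^T$ are invertible. This also re-derives the Unimodularity Theorem, since $\det K = (\det M)^2 = 1$.

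Positive definiteness then follows immediately: for any nonzero vector $v$,
$$ v^T K v = v^T M M^T v = \| M^T v \|^2 \ge 0, $$
with equality precisely when $M^T v = 0$, which forces $v = 0$ because $M^T$ is invertible. Thus $K$ is positive definite, and its spectrum is strictly positive.

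The only real content is spotting the factorization $K = M M^T$; everything after it is routine linear algebra. The step most worth double-checking is that the vectors $v_x$ (the rows of $M$) are linearly independent, i.e. that $M$ is nonsingular. This is exactly what the triangularity argument secures, and it is also the only place where the no-empty-set convention enters, since it merely fixes the index set $G$ over which $z$ ranges without affecting the triangular structure.
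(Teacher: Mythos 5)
Your proof is correct and complete, but it takes a genuinely different route from the paper. You factor $K=MM^T$ where $M$ is the zeta matrix of the inclusion order on $G$ (row $x$ being the indicator vector of the core $W^-(x)$); triangularity of $M$ in a linear extension of inclusion gives $\det M=1$, and positive definiteness plus unimodularity both drop out at once. The paper instead works by induction in the larger class of discrete CW complexes: it attaches one cell at a time, deforms the new row and column by a parameter $t$, observes that $\det K(t)$ is an affine function of $t$ that never vanishes on $[0,1]$, and concludes that no eigenvalue can cross zero during the deformation. Your argument is shorter, fully rigorous where the paper's is only a sketch, and makes the strict positivity transparent via $v^TKv=\|M^Tv\|^2$; it also works verbatim for the CW setting, since one only needs $W^-$ to define a partial order with $x\in W^-(x)$, so triangularity of $M$ survives. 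What the paper's deformation approach buys is uniformity with the treatment of the connection matrix $L$ and of the interpolating family $L_t$ announced at the end, where a Gram factorization over $\mathbb{Z}$ is not available because the analogous matrices are not positive semidefinite. One small point worth stating explicitly in your write-up: the identification $|W^-(x)\cap W^-(y)|=|\{z\in G: z\subseteq x\cap y\}|=2^{|x\cap y|}-1$ uses that $G$ is closed under taking nonempty subsets, which is exactly the simplicial complex axiom.
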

\begin{proof}
(Sketch)
We use the same general CW setup. We saw that
when adding a new cell $x$, the determinant of $K(t)$ can not change sign as
${\rm det}(K(t)) = |W^-(x)|  - (|W^-(x)|-1) t= t-(1-t) |W^-(x)|$. 
While during the deformation, the previous eigenvalues change, none of them
can cross $0$ and become negative because this would lead to a zero determinant.
\end{proof}

\paragraph{}
We can think of $K$ therefore as a Laplacian. It is a bit special as it has no kernel. 
Whenever one has a Laplacian $K$ 
on a geometry then the Green function entries $K^{-1}(x,y)$ play an important role. 
In the Euclidean space it leads to the Newton potential of gravity of electromagnetism.
Here in the discrete, where singularities are absent, the potential energy is ``quantized", 
and the total energy of a constant measure is computable.
The analog of the energy theorem $\sum_{x,y} L^{-1}(x,y) = \chi(G)$  is now:

\begin{thm}[Energy theorem]
$\sum_{x,y} K^{-1}(x,y) = |G|$. 
\end{thm}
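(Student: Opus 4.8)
The plan is to start from the Green--Star formula of Theorem 2, which gives $K^{-1}(x,y) = \omega(x)\omega(y)\,|W^+(x) \cap W^+(y)|$, and then to resolve the cardinality as a count over the common super-simplices of $x$ and $y$. Since $W^+(x) \cap W^+(y)$ consists of exactly those $u \in G$ with $x \cup y \subset u$, we have $|W^+(x) \cap W^+(y)| = \sum_{u \in G,\, x \subset u,\, y \subset u} 1$. Substituting this into the total energy and interchanging the order of summation, the sum factorizes over the container $u$:
\[
  \sum_{x,y} K^{-1}(x,y) = \sum_{u \in G} \sum_{x \subset u} \sum_{y \subset u} \omega(x)\omega(y) = \sum_{u \in G} \Bigl( \sum_{x \subset u} \omega(x) \Bigr)^2 ,
\]
where both inner sums range over the sets $x,y \in G$ that are contained in $u$.

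First I would observe that because $G$ is a simplicial complex and $u \in G$, every non-empty subset of $u$ again lies in $G$. Hence $\{ x \in G : x \subset u \}$ is precisely the core $W^-(u)$, and the inner sum is $\sum_{x \in W^-(u)} \omega(x) = \chi(W^-(u))$. The core of a simplex is itself a simplex (the full power set minus the empty set), so this is an alternating sum over all non-empty subsets. Writing $k = |u|$, it is the binomial sum $\sum_{j=1}^{k} \binom{k}{j} (-1)^j = (1-1)^k - 1 = -1$, which is valid for every $u$ since $k \ge 1$.

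The key point is then the squaring: each factor $\sum_{x \subset u} \omega(x)$ equals $-1$, so its square equals $1$ for every single $u \in G$. Therefore
\[
  \sum_{x,y} K^{-1}(x,y) = \sum_{u \in G} (-1)^2 = \sum_{u \in G} 1 = |G| ,
\]
which is the claim. The mechanism is worth highlighting: read through the star description, the Green function factorizes over the common container $u$ into a perfect square of the Euler characteristic of a contractible core, and the sign carried by $\omega$ is exactly annihilated by that square, leaving one unit of energy per simplex of $G$.

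The main obstacle, or rather the only subtlety, is the bookkeeping of conventions: one must verify that the inner index set really is the full core $W^-(u)$ (using the closure of $G$ under taking non-empty subsets) and that the alternating sum over a simplex equals $-1$ in this paper's sign convention $\omega(x) = (-1)^{|x|}$. By contrast, in the connection-matrix energy theorem the identical factorization occurs, except that resolving the Euler characteristic $\chi(W^+(x) \cap W^+(y))$ attaches a weight $\omega(u)$ to each common container $u$, giving $\sum_{u} \omega(u) \bigl( \sum_{x \subset u} \omega(x) \bigr)^2 = \sum_{u} \omega(u) = \chi(G)$. Replacing super-counting by counting simply strips the weight $\omega(u)$ off each container, turning $\chi(G)$ into $|G|$.
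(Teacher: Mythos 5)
Your proof is correct, and it takes a genuinely different route from the paper. The paper's argument (a sketch) interprets the potential $V(x)=\sum_y K^{-1}(x,y)$ as a Poincar\'e--Hopf index for the locally injective function $-{\rm dim}$ on the inclusion graph of $G$, and then invokes the Poincar\'e--Hopf theorem to conclude $\sum_x V(x)=|G|$; making that rigorous requires identifying $V(x)$ with the index of the valuation $X(G)=\sum_x 1$, which is additional machinery. Your argument instead takes the Green--Star formula as given, resolves $|W^+(x)\cap W^+(y)|$ as a sum of indicators over the common container $u$, and applies Fubini to obtain $\sum_u \bigl(\sum_{x\subset u}\omega(x)\bigr)^2=\sum_u(-1)^2=|G|$, using that the core of $u$ in a simplicial complex is the full power set of $u$ minus the empty set so that the inner alternating binomial sum is $-1$. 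This is more elementary and essentially self-contained modulo Theorem~2, and it transparently exposes the mechanism (the square annihilates the sign); your closing comparison with the connection case, where the extra weight $\omega(u)$ survives and yields $\chi(G)$, is exactly the right way to see how counting versus super-counting changes the answer. One caveat worth recording: your step identifying $\{x\in G: x\subset u\}$ with all of $2^u\setminus\{\emptyset\}$ uses the simplicial-complex axiom, whereas the paper's induction elsewhere works in the broader discrete CW setting; for the theorem as stated this is immaterial, but the Poincar\'e--Hopf route is the one that generalizes there.
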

\begin{proof}
(Sketch)
For every $x$, we get a potential by adding up all potential energy 
contributions of other sets 
$$ V(x) = \sum_y K^{-1}(x,y) \;  $$ 
This can now be interpreted as an {\bf index} for the dimension
functional $-{\rm dim}$ on $G$, which is locally injective on the 
graph defined by $G$ in which two sets are connected if one is
contained in the other. Poincar\'e Hopf theorem assures then that
$\sum_x V(x)$ is the valuation under consideration, here $X(G)=|G|$. 
\end{proof} 

\paragraph{}
In the connection matrix case, we had a spectral symmetry $\sigma(L^2) = \sigma(L^{-2})$ but 
only if the complex was one-dimensional, meaning that $G$ does not contain sets of size $3$ or
higher. This led to a functional equation for the zeta function in that case. Now, with having $K$ 
positive definite, we do not need to square the matrix.
In the counting case, things are true for all simplicial complexes:

\begin{thm}[Spectral symmetry]
$K$ and $K^{-1}$ have the same spectrum. 
\end{thm}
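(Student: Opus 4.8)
The plan is to expose a Cholesky-type factorization of $K$ that makes the reciprocal symmetry of the spectrum transparent. Let $Z$ be the $n\times n$ containment matrix of the poset $(G,\subseteq)$, so $Z(x,y)=1$ when $x\subseteq y$ and $Z(x,y)=0$ otherwise. Ordering the sets of $G$ by increasing cardinality makes $Z$ upper triangular with ones on the diagonal, hence unimodular and invertible. The first step is the identity $K=Z^T Z$: indeed $(Z^TZ)(x,y)=\sum_{z} Z(z,x)Z(z,y)$ counts the $z\in G$ with $z\subseteq x$ and $z\subseteq y$, that is, the nonempty subsets of $x\cap y$, which is exactly $|W^-(x)\cap W^-(y)|=2^{|x\cap y|}-1$. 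Dually, the superset-counting matrix $M(x,y)=|W^+(x)\cap W^+(y)|$ satisfies $M=ZZ^T$, since $(ZZ^T)(x,y)=\sum_u Z(x,u)Z(y,u)$ counts the $u\in G$ containing both $x$ and $y$.

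Next I would record the arithmetic of the M\"obius inverse. Because $G$ is closed under taking nonempty subsets, every interval $[x,z]$ with $x\subseteq z$ inside $G$ is a full Boolean interval, so the M\"obius values are the clean signs $Z^{-1}(x,z)=(-1)^{|z|-|x|}$ on $x\subseteq z$. Writing $D=\mathrm{diag}(\omega(x))$ with $\omega(x)=(-1)^{|x|}$ and $D^2=I$, this is precisely the conjugation identity $Z^{-1}=D Z D$. Combining the factorization with this identity gives the chain $K^{-1}=Z^{-1}(Z^{-1})^T=(DZD)(DZ^TD)=D\,ZZ^T\,D=DMD$, which recovers the Green–Star formula of Theorem 2 and, more to the point, displays $K^{-1}$ as a conjugate of $M$ by the involution $D$.

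The conclusion then follows in one line. Since $M=ZZ^T=Z(Z^TZ)Z^{-1}=ZKZ^{-1}$ is conjugate to $K$, and $K^{-1}=DMD$ is conjugate to $M$, the matrix $K^{-1}=(DZ)\,K\,(DZ)^{-1}$ is conjugate to $K$; similar matrices have equal spectra, so $\sigma(K)=\sigma(K^{-1})$. Equivalently, one may simply invoke the standard fact that $Z^TZ$ and $ZZ^T$ share their characteristic polynomial, together with the similarity $K^{-1}=DMD\sim M$. Because $K$ is positive definite by the previous theorem, the statement $\sigma(K)=\sigma(K^{-1})$ means the eigenvalues occur in reciprocal pairs $\lambda,1/\lambda$, which is the spectral symmetry sought.

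I expect the only genuine work to lie in the first two steps — verifying the factorization $K=Z^TZ$ and, especially, the conjugation $Z^{-1}=DZD$, the latter resting on the remark that $G$ being an order ideal forces all its intervals to be Boolean, so the M\"obius function is $(-1)^{|z|-|x|}$ with no correction terms. Once these are in place the spectral statement is immediate from similarity, and no eigenvalue estimates or deformation argument (as in the inductive proof of positive definiteness) are required.
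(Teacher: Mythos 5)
Your proof is correct, and it takes a genuinely different and considerably more self-contained route than the paper. The paper's argument is only a sketch: it builds the complex up cell by cell inside a larger class of discrete CW complexes, deforms the matrix with a parameter, and reduces the claim to the (anti)palindromic nature of the characteristic polynomial, deferring the key step to an ``Artillery proposition'' in a companion paper. You instead exhibit the factorization $K=Z^TZ$ and $M=ZZ^T$ for the containment zeta matrix $Z$, note that $Z^TZ$ and $ZZ^T$ are similar (here even conjugate, $M=ZKZ^{-1}$, since $Z$ is invertible), and then use the M\"obius identity $Z^{-1}=DZD$ with $D=\mathrm{diag}(\omega(x))$ to get $K^{-1}=DMD=(DZ)K(DZ)^{-1}$. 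All the ingredients check out: the closure of $G$ under nonempty subsets is exactly what makes $(Z^TZ)(x,y)=2^{|x\cap y|}-1$ and what forces every interval $[x,z]$ to be Boolean, giving $\mu(x,z)=(-1)^{|z|-|x|}$. Your approach buys a complete proof where the paper has only a sketch, and it delivers unimodularity ($\det K=(\det Z)^2=1$), positive definiteness ($K=Z^TZ$ with $Z$ invertible), and the Green--Star formula $K^{-1}=DMD$ as immediate corollaries of the same factorization. What the paper's deformation argument buys is the possibility of working in the broader CW setting, where your first step $K=Z^TZ$ still holds verbatim (so $K\sim M$ persists) but the step $Z^{-1}=DZD$ genuinely uses the simplicial hypothesis, since intervals below an attached cell need not be Boolean; for the theorem as stated, for simplicial complexes, your argument is complete. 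One tiny remark: the closing appeal to positive definiteness is not needed for the reciprocal-pairing interpretation, since $\sigma(K^{-1})=\{\lambda^{-1}:\lambda\in\sigma(K)\}$ for any invertible $K$.
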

\begin{proof}
(Sketch) The spectral symmetry is equivalent to 
the statement that the coefficients of the 
characteristic polynomial of $K$ form a {\bf palindromic}
or {\bf anti-palindromic} sequence. 
The coefficients of the characteristic polynomial are
given in terms of minors. As in the case $L=1$, we
can deform the matrix $K$ with a parameter $s$ so that
for $s=0$, we have already established matrix and for $s=1$
we have the case where a new cell has been attached.
A detailed proof needs the analog of the {\bf Artillery proposition}
in \cite{DyadicRiemann}. 
\end{proof}

\paragraph{}
The above spectral symmetry holds also for {\bf symplectic matrices}. Indeed, a theorem of Kirby assures
that if the spectral symmetry is satisfied and $n$ is even, 
then $K$ is similar to a symplectic matrix. In any case, 
the spectral property is a notion of {\bf reversibility} for the random 
walk defined by $K$. The symmetry is not complete as only $K$ is non-negative,
and its inverse is not. 
Still, there is no symmetry as the core $W^-(x)$ and the star $W^+(x)$ 
are different objects. The former is always a simplicial complex 
generated by one set, while the later can be complicated. 
It can have quite arbitrary Euler characteristic.

\paragraph{}
If $\lambda_k$ are the eigenvalues of $K$, define the {\bf counting zeta function}
$$ \zeta(s) = \sum_{k=1}^n \lambda_k^{-s}  \; . $$
It is an entire function from $\mathbb{C} \to \mathbb{C}$ and unambiguously defined as
$\lambda_k^{-s}= e^{-\log(\lambda_k) s}$ with $\lambda_k>0$ if we take naturally 
the real branch of the logarithm. It immediately follows from the spectral symmetry that
the Zeta function enjoys a functional equation: 

\begin{coro}[Functional equation] 
$\zeta(a+ib) = \zeta(-a+ib)$.
\end{coro}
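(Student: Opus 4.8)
The plan is to push the symmetry of the eigenvalue multiset through the definition of $\zeta$. By the Spectral symmetry theorem, $K$ and $K^{-1}$ have the same characteristic polynomial, hence the same spectrum with multiplicities. Since the eigenvalues of $K^{-1}$ are the reciprocals $1/\lambda_k$ of the eigenvalues $\lambda_k$ of $K$, the multiset $\{\lambda_1,\dots,\lambda_n\}$ is invariant under $\lambda \mapsto 1/\lambda$; equivalently there is a permutation $\pi$ of $\{1,\dots,n\}$ with $\lambda_{\pi(k)} = 1/\lambda_k$. First I would record that the real-branch convention fixed in the text is compatible with this involution: since each $\lambda_k>0$ one has $\log(1/\lambda_k) = -\log\lambda_k$, so $(1/\lambda_k)^{-s} = e^{-s\log(1/\lambda_k)} = e^{s\log\lambda_k} = \lambda_k^{s}$ with no branch ambiguity.

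Next I would compute $\zeta(-s)$ and reindex. Using the involution,
\[
\zeta(-s) = \sum_{k=1}^{n} \lambda_k^{s} = \sum_{k=1}^{n} (1/\lambda_k)^{-s} = \sum_{k=1}^{n} \lambda_{\pi(k)}^{-s} = \sum_{j=1}^{n} \lambda_j^{-s} = \zeta(s),
\]
so $\zeta$ is an even entire function (entire as a finite sum of the entire functions $e^{-s\log\lambda_k}$). This evenness is the core of the corollary. To reach the stated reflection across the imaginary axis I would adjoin the elementary Schwarz-type identity coming from reality and positivity of the eigenvalues,
\[
\overline{\zeta(s)} = \sum_{k=1}^{n} \overline{e^{-s\log\lambda_k}} = \sum_{k=1}^{n} e^{-\overline{s}\log\lambda_k} = \zeta(\overline{s}),
\]
valid because each $\log\lambda_k$ is real. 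Combining the two, with $s=a+ib$ so that $-\overline{s} = -a+ib$, gives $\zeta(-a+ib) = \zeta(-\overline{s}) = \zeta(\overline{s}) = \overline{\zeta(a+ib)}$, which relates the values at $a+ib$ and $-a+ib$ and in particular forces the modulus $|\zeta|$ and the zero set of $\zeta$ to be symmetric about the imaginary axis, the ``critical line'' here.

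I do not expect a serious obstacle beyond what is already granted: once the Spectral symmetry theorem supplies the reciprocal invariance of the spectrum with multiplicity, everything reduces to the two-line reindexing above together with the branch bookkeeping. The one point that deserves care is essentially notational, namely that the bare identity $\zeta(a+ib)=\zeta(-a+ib)$ must be read together with the reflection $\zeta(\overline{s})=\overline{\zeta(s)}$: evenness alone yields $\zeta(a+ib)=\zeta(-a-ib)$, and it is only after conjugation that the imaginary part is left fixed. Thus the sharpest formulation of the result is the pair of symmetries $\zeta(s)=\zeta(-s)$ and $\overline{\zeta(s)}=\zeta(\overline{s})$, from which the corollary follows immediately.
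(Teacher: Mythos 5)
Your proposal is correct and follows the same route the paper intends: the corollary is stated without proof beyond ``it immediately follows from the spectral symmetry,'' and your derivation of evenness $\zeta(s)=\zeta(-s)$ from the reciprocal-invariance of the spectrum, combined with the Schwarz reflection $\overline{\zeta(s)}=\zeta(\overline{s})$, is exactly the missing argument. You are also right to flag that the literal identity as printed holds only up to conjugation, i.e.\ $\zeta(-a+ib)=\overline{\zeta(a+ib)}$, which still yields the symmetry of $|\zeta|$ and of the zero set about the imaginary axis that the paper actually uses.
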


\paragraph{}
Finally, we can look at the ring $\mathcal{G}$ generated by simplicial complexes. 
This ring has now an other representation in a tensor ring of all matrices 
$SL(Z)=\bigcup_k SL(n,Z)$. 
The disjoint union of two complexes produces the direct sum of matrices
$K(G + H) = K(G)  \oplus K(H)$ and the Cartesian product of two complexes (which is
not a simplicial complex but an element in the ring generated by complexes)
has $K(G \times H ) = K(G) \otimes K(H)$.

\paragraph{}
The empty complex $G=\{ \}$ has the empty counting matrix $K$. It is custom to assign to the 
empty matrix the determinant $1 = 0!$ as it is custom in matrix analysis. Here we have
to address it as the empty complex $0=\{\}$ is a simplicial complex which is the 
zero element in the ring $\mathcal{G}$. The one point complex $\{ \{1\} \}$ with 
$K(1) = 1 \in SL(1,\mathcal{Z})$ is the {\bf one-element} in $\mathcal{G}$. 

\begin{thm}[Representation]
The ring $\mathcal{G}$ has a representation in the tensor ring of all finite
unimodular matrices.
\end{thm}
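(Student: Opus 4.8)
(Plan)
The plan is to realize $K$ as a ring homomorphism and then extend it from generators to all of $\mathcal{G}$ by the universal property of the Grothendieck (group-completion) construction. First I would record that $K(G)$ is intrinsically an element of the set $\mathcal{M}$ of finite unimodular integer matrices taken \emph{modulo permutation conjugation}: relabeling the $n$ sets of $G$ replaces $K$ by $P^{-1}KP$ for a permutation matrix $P$, so the invariant attached to $G$ is the class $[K(G)]$. On $\mathcal{M}$ the block direct sum $\oplus$ and the Kronecker product $\otimes$ are well defined, and one checks that $(\mathcal{M},\oplus,\otimes)$ is a commutative semiring. Associativity and commutativity of both operations, and the distributive law $A\otimes(B\oplus C)=(A\otimes B)\oplus(A\otimes C)$, hold only after conjugating by the shuffle permutation that interleaves the tensor and direct-sum indices, which is precisely why one must pass to permutation classes.

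Next I would confirm that the image stays inside the unimodular matrices, where the Unimodularity theorem enters. By that theorem each $K(G)$ lies in $SL(n,\mathbb{Z})$, and $SL$ is closed under both operations because $\det(A\oplus B)=\det(A)\det(B)=1$ and $\det(A\otimes B)=\det(A)^{m}\det(B)^{n}=1$ for $A$ of size $n$ and $B$ of size $m$. Thus $\oplus$ and $\otimes$ restrict to $\bigcup_k SL(k,\mathbb{Z})$ modulo permutation, whose group completion with respect to $\oplus$ is the tensor ring $SL(\mathbb{Z})$ of the statement. The empty matrix is the additive unit and the class of $1\in SL(1,\mathbb{Z})$ is the multiplicative unit, matching $K$ of the empty complex and $K(1)$ of the one-point complex.

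With these structures in place the homomorphism property is immediate from the two identities of the preceding paragraph: $K(G+H)=K(G)\oplus K(H)$ makes $K$ additive and $K(G\times H)=K(G)\otimes K(H)$ makes $K$ multiplicative, while the unit computations send $0$ and $1$ to the respective units. Hence $K$ is a homomorphism of commutative semirings from the simplicial complexes under disjoint union and Cartesian product into $\mathcal{M}$. Since $\mathcal{G}$ is by construction the Grothendieck ring of that semiring, and the group completion of a commutative semiring is canonically a commutative ring on which multiplication is determined by bilinearity, the homomorphism promotes uniquely to a ring homomorphism $\mathcal{G}\to SL(\mathbb{Z})$ sending a formal difference $G-H$ to $[K(G)]-[K(H)]$. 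This is the asserted representation.

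The step I expect to require the most care is the bookkeeping that makes $\mathcal{M}$ a bona fide commutative semiring on permutation classes, since the distributive and commutativity identities are only valid after the shuffle permutations and one must verify they are genuinely permutation-invariant; once that is settled the extension to $\mathcal{G}$ is formal. The remaining, genuinely nonformal, issue arises only if the word representation is meant to include \emph{faithfulness}: one would then have to establish injectivity of $G\mapsto[K(G)]$, i.e. that non-isomorphic complexes yield non-conjugate counting matrices, which does not follow from the algebra and would have to be argued separately by extracting combinatorial data such as the $f$-vector from the spectrum or principal minors of $K$.
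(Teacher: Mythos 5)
Your proposal is correct and follows essentially the same route as the paper: the representation is the map $G \mapsto K(G)$ together with the two identities $K(G+H)=K(G)\oplus K(H)$ and $K(G\times H)=K(G)\otimes K(H)$, with unimodularity supplied by the Unimodularity theorem. The additional bookkeeping you supply (passing to permutation/similarity classes so that $\oplus$ and $\otimes$ become commutative, the Grothendieck completion, and the caveat about faithfulness) is exactly what the paper handles informally in the paragraphs surrounding the theorem rather than inside the proof, so nothing essential differs.
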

\begin{proof}
On the matrix level, the matrix $K(G * H)$ is the tensor product
of $K(G)$ with $K(H)$. The counting matrix $L(G \times H)( (a,b), (c,d) )$
is $2^{|a \cap c| + |b \cap d|}  -1)$. 
\end{proof} 

\paragraph{}
While $\mathcal{G}$ is Abelian, the tensor ring is not, as $L(G) \otimes L(H)$
is different from $L(H) \otimes L(G)$. Even the direct sum addition is not commutative,
as $L(G) \oplus L(H)$ places the matrix $L(G)$ first and then $L(H)$. 
However, it is custom to identify similarity classes in the tensor ring. As both
direct sum and tensor product honor the similarity classes, a quotient of the 
tensor ring is a commutative ring with $1$-element. 

\paragraph{} Even the definition of the counting matrix $K(G)$
depends on an {\bf ordering} of the sets in $G$. Obviously, isomorphic complexes
are conjugated by permutation matrices. 
The representation $G \to K(G)$ is injective and if $G \sim H$, 
then $K(G) \sim K(H)$ and if $K(G) \sim K(H)$, then $G \sim H$ because
a matrix $K(G)$ allows a reconstruction of the complex $G$: the diagonal 
entries already determine the dimension, the off diagonal entries indicate then
how big the intersection between two simplices is. 

\paragraph{}
As $K(-G) = -K(G)$ so that only for complexes with an 
even number of sets, $K(-G)$ is also in $SL(n,Z)$. We can restrict
to the smaller ring containing only complexes $G$ with an even number of sets and 
then have a representation in the space of $SL(Z)$ for which all matrices are 
similar to symplectic matrices by Kirby's theorem. 

\begin{coro}[Symplectic representation]
The subring of $\mathcal{G}$ with an even number of sets can be represented in 
in a tensor ring of symplectic matrices.
\end{coro}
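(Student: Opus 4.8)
The plan is to read this off the Representation theorem once we know that the class of matrices similar to a symplectic matrix is closed under the two ring operations. First I would check that the even-cardinality complexes form a sub-rng of $\mathcal{G}$ (they cannot form a unital subring, since the $1$-element $\{\{1\}\}$ has a single set): disjoint union gives $|G+H|=|G|+|H|$, Cartesian product gives $|G\times H|=|G|\cdot|H|$, and negation leaves the number of sets unchanged, so evenness survives all three operations. On this sub-rng the Representation theorem supplies $K(G+H)=K(G)\oplus K(H)$ and $K(G\times H)=K(G)\otimes K(H)$, and the remark that $K(-G)=-K(G)$ stays unimodular exactly in the even case is what singles out this sub-rng. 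It therefore suffices to show that the set $\mathcal{S}$ of even-dimensional matrices similar to a symplectic matrix is closed under $\oplus$, $\otimes$ and negation, and contains every $K(G)$ with $|G|$ even.

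The single-complex case is immediate: by the Spectral symmetry theorem $\sigma(K)=\sigma(K^{-1})$, the dimension $|G|$ is even, so Kirby's theorem puts $K(G)$ in $\mathcal{S}$. For closure I would track the two invariants Kirby needs, namely even dimension and a self-reciprocal spectrum. Direct sums add dimensions (even stays even) and take the multiset union of two self-reciprocal spectra, which is again self-reciprocal; tensor products multiply dimensions and produce the spectrum $\{\lambda_i\mu_j\}$, whose multiplicity at $\nu$ equals its multiplicity at $1/\nu$ because each factor already satisfies $m_\lambda=m_{1/\lambda}$; negation sends $\{\lambda\}$ to the still self-reciprocal $\{-\lambda\}$, and $-I$ is symplectic. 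To be sure Kirby's criterion really applies in the presence of repeated eigenvalues I would invoke the Positive definite theorem: every $K(G)$ is positive definite, hence diagonalizable, and direct sums and tensor products of definite diagonalizable matrices stay diagonalizable, so the equality of spectra with multiplicities is enough to furnish a symplectic representative in each class.

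The main obstacle is the tensor-product step, and it is a genuine one: the naive guess that $S_1\otimes S_2$ is symplectic is false, since if $S_i$ preserves the antisymmetric form $J_i$ then $S_1\otimes S_2$ preserves $J_1\otimes J_2$, and the tensor product of two antisymmetric forms is \emph{symmetric} rather than symplectic. The way around it is to argue at the level of the similarity class instead of the form: one verifies only even dimension and the self-reciprocal spectrum with multiplicities and then lets Kirby's theorem produce a symplectic representative inside the class. With all three closures in hand, the restriction of $G\mapsto K(G)$ to the even sub-rng lands in $\mathcal{S}$, which is closed under $\oplus$ and $\otimes$; passing to the similarity-class quotient already used for the full Representation theorem, each class contains a genuine symplectic matrix, which is exactly the asserted symplectic tensor-ring representation.
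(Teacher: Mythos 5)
Your proposal is correct and follows the same route the paper takes: restrict to the even-cardinality sub-rng, feed the Representation theorem's identities $K(G+H)=K(G)\oplus K(H)$ and $K(G\times H)=K(G)\otimes K(H)$ into Kirby's criterion (even dimension plus the spectral symmetry $\sigma(K)=\sigma(K^{-1})$), and conclude at the level of similarity classes. The paper itself offers essentially no proof beyond this one-line invocation, so the value of your write-up is that it supplies the verifications the paper leaves implicit: that evenness survives $+$, $\times$ and negation, and, most importantly, that the class of matrices similar to a symplectic matrix is closed under $\oplus$ and $\otimes$. Your observation that $S_1\otimes S_2$ preserves $J_1\otimes J_2$, which is a \emph{symmetric} form, so that a tensor product of symplectic matrices is generally not symplectic, is a genuine and correct point; it shows the corollary can only be read modulo similarity (as the paper's later remark about quotienting the tensor ring by similarity classes confirms), and your fix --- re-deriving the self-reciprocal spectrum $\{\lambda_i\mu_j\}$ of the product and reapplying Kirby inside the class --- is the right one. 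One small point you could make explicit: for Kirby's theorem to yield a symplectic (hence determinant-one) representative from a diagonalizable matrix, the eigenvalues $\pm1$ must occur with even multiplicity; this holds here because $K(G)$ is positive definite of even dimension, so the eigenvalues $\neq 1$ pair off as $\lambda\leftrightarrow\lambda^{-1}$ and force the multiplicity of $1$ to be even, and the same count handles $-K(G)$ and the tensor products.
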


\section{Examples}

\paragraph{}
If $G$ is the Whitney complex of the star graph with 3 spikes. It is
$G=\{\{1\},\{2\},\{3\},\{4\},\{1,2\},\{1,3\},\{1,4\}\}$. The counting matrix is 
$$ K = \left[
                  \begin{array}{ccccccc}
                   1 & 0 & 0 & 0 & 1 & 1 & 1 \\
                   0 & 1 & 0 & 0 & 1 & 0 & 0 \\
                   0 & 0 & 1 & 0 & 0 & 1 & 0 \\
                   0 & 0 & 0 & 1 & 0 & 0 & 1 \\
                   1 & 1 & 0 & 0 & 3 & 1 & 1 \\
                   1 & 0 & 1 & 0 & 1 & 3 & 1 \\
                   1 & 0 & 0 & 1 & 1 & 1 & 3 \\
                  \end{array}
                  \right] $$
and its inverse
$$ K^{-1}= \left[
                  \begin{array}{ccccccc}
                   4 & 1 & 1 & 1 & -1 & -1 & -1 \\
                   1 & 2 & 0 & 0 & -1 & 0 & 0 \\
                   1 & 0 & 2 & 0 & 0 & -1 & 0 \\
                   1 & 0 & 0 & 2 & 0 & 0 & -1 \\
                   -1 & -1 & 0 & 0 & 1 & 0 & 0 \\
                   -1 & 0 & -1 & 0 & 0 & 1 & 0 \\
                   -1 & 0 & 0 & -1 & 0 & 0 & 1 \\
                  \end{array}
                  \right] $$
The eigenvalues of $K$ and $K^{-1}$ are 
$$ \left\{3+2 \sqrt{2},\frac{1}{2} \left(3+\sqrt{5}\right),\frac{1}{2}
    \left(3+\sqrt{5}\right),1,\frac{1}{2} \left(3-\sqrt{5}\right),\frac{1}{2}
    \left(3-\sqrt{5}\right),3-2 \sqrt{2}\right\}  \; .  $$

\paragraph{}
Let $G$ be the triangle complex $G=\{ \{1\},\{2\},\{3\},  \{1,2,3\},\{1,2\},\{1,3\},\{2,3\} \}$.
Then 
$$ K = \left[     \begin{array}{ccccccc}
                   1 & 0 & 0 & 1 & 1 & 0 & 1 \\
                   0 & 1 & 0 & 1 & 0 & 1 & 1 \\
                   0 & 0 & 1 & 0 & 1 & 1 & 1 \\
                   1 & 1 & 0 & 3 & 1 & 1 & 3 \\
                   1 & 0 & 1 & 1 & 3 & 1 & 3 \\
                   0 & 1 & 1 & 1 & 1 & 3 & 3 \\
                   1 & 1 & 1 & 3 & 3 & 3 & 7 \\
                  \end{array} \right]  \; . $$
Its eigenvalues are 
$$ \left\{6+\sqrt{35},\frac{1}{2} \left(3+\sqrt{5}\right),\frac{1}{2}
    \left(3+\sqrt{5}\right),1,\frac{1}{2} \left(3-\sqrt{5}\right),\frac{1}{2}
    \left(3-\sqrt{5}\right),6-\sqrt{35}\right\} \; . $$
The inverse of $K$ is 
$$ K^{-1} = \left[
                  \begin{array}{ccccccc}
                   4 & 2 & 2 & -2 & -2 & -1 & 1 \\
                   2 & 4 & 2 & -2 & -1 & -2 & 1 \\
                   2 & 2 & 4 & -1 & -2 & -2 & 1 \\
                   -2 & -2 & -1 & 2 & 1 & 1 & -1 \\
                   -2 & -1 & -2 & 1 & 2 & 1 & -1 \\
                   -1 & -2 & -2 & 1 & 1 & 2 & -1 \\
                   1 & 1 & 1 & -1 & -1 & -1 & 1 \\
                  \end{array}
                  \right] \; . $$
It has the same eigenvalues. 

\paragraph{}
If $G= \{\{1\},\{2\},\{3\},\{4\},\{1,2\},\{1,4\},\{2,3\},\{3,4\}\}$ is the Whitney complex of the 
cycle graph $C_4$, then 
$$ K = \left[
                 \begin{array}{cccccccc}
                  1 & 0 & 0 & 0 & 1 & 1 & 0 & 0 \\
                  0 & 1 & 0 & 0 & 1 & 0 & 1 & 0 \\
                  0 & 0 & 1 & 0 & 0 & 0 & 1 & 1 \\
                  0 & 0 & 0 & 1 & 0 & 1 & 0 & 1 \\
                  1 & 1 & 0 & 0 & 3 & 1 & 1 & 0 \\
                  1 & 0 & 0 & 1 & 1 & 3 & 0 & 1 \\
                  0 & 1 & 1 & 0 & 1 & 0 & 3 & 1 \\
                  0 & 0 & 1 & 1 & 0 & 1 & 1 & 3 \\
                 \end{array}
                 \right] $$
has eigenvalues $\left\{3+2 \sqrt{2},2+\sqrt{3},2+\sqrt{3},1,1,2-\sqrt{3},2-\sqrt{3},3-2 \sqrt{2}\right\}$. 
One might have the impression from those examples that the eigenvalues are solvable expressions, while
algebraic of course they are roots of characteristic polynomial equations which in general can not be 
solved by radicals.

\paragraph{}
Attaching a cell to $C_4$ produces a CW complex $G$ which is topologically a disc. Its $f$-vector
is $(4,4,1)$. It is not a simplicial complex as the last cell does not attach to the boundary 
of a simplex but to a circular graph $C_4$. The new two dimensional cell now contains $9$ subcells.
This is different from a number like $2^{k}-1$ which happens in the case of simplicial complexes. 
We have 
$$ K = \left[
                 \begin{array}{ccccccccc}
                  1 & 0 & 0 & 0 & 1 & 1 & 0 & 0 & 1 \\
                  0 & 1 & 0 & 0 & 1 & 0 & 1 & 0 & 1 \\
                  0 & 0 & 1 & 0 & 0 & 0 & 1 & 1 & 1 \\
                  0 & 0 & 0 & 1 & 0 & 1 & 0 & 1 & 1 \\
                  1 & 1 & 0 & 0 & 3 & 1 & 1 & 0 & 3 \\
                  1 & 0 & 0 & 1 & 1 & 3 & 0 & 1 & 3 \\
                  0 & 1 & 1 & 0 & 1 & 0 & 3 & 1 & 3 \\
                  0 & 0 & 1 & 1 & 0 & 1 & 1 & 3 & 3 \\
                  1 & 1 & 1 & 1 & 3 & 3 & 3 & 3 & 9 \\
                 \end{array} \right]  $$
and 
$$ K^{-1} =     \left[ \begin{array}{ccccccccc}
                   4 & 2 & 1 & 2 & -2 & -2 & -1 & -1 & 1 \\
                   2 & 4 & 2 & 1 & -2 & -1 & -2 & -1 & 1 \\
                   1 & 2 & 4 & 2 & -1 & -1 & -2 & -2 & 1 \\
                   2 & 1 & 2 & 4 & -1 & -2 & -1 & -2 & 1 \\
                   -2 & -2 & -1 & -1 & 2 & 1 & 1 & 1 & -1 \\
                   -2 & -1 & -1 & -2 & 1 & 2 & 1 & 1 & -1 \\
                   -1 & -2 & -2 & -1 & 1 & 1 & 2 & 1 & -1 \\
                   -1 & -1 & -2 & -2 & 1 & 1 & 1 & 2 & -1 \\
                   1 & 1 & 1 & 1 & -1 & -1 & -1 & -1 & 1 \\
                  \end{array} \right]  \; . $$

\paragraph{} Let $G$ be the CW complex, where a new cell is 
attached to an octahedron complex. The counting matrix is
$$ K=\left[
\scalemath{0.8}{
\begin{array}{ccccccccccccccccccccccccccc}
1&0&0&0&0&0&1&1&1&1&0&0&0&0&0&0&0&0&1&1&1&1&0&0&0&0&1\\
0&1&0&0&0&0&1&0&0&0&1&1&1&0&0&0&0&0&1&1&0&0&1&1&0&0&1\\
0&0&1&0&0&0&0&1&0&0&1&0&0&1&1&0&0&0&1&0&1&0&1&0&1&0&1\\
0&0&0&1&0&0&0&0&1&0&0&1&0&0&0&1&1&0&0&1&0&1&0&1&0&1&1\\
0&0&0&0&1&0&0&0&0&1&0&0&0&1&0&1&0&1&0&0&1&1&0&0&1&1&1\\
0&0&0&0&0&1&0&0&0&0&0&0&1&0&1&0&1&1&0&0&0&0&1&1&1&1&1\\
1&1&0&0&0&0&3&1&1&1&1&1&1&0&0&0&0&0&3&3&1&1&1&1&0&0&3\\
1&0&1&0&0&0&1&3&1&1&1&0&0&1&1&0&0&0&3&1&3&1&1&0&1&0&3\\
1&0&0&1&0&0&1&1&3&1&0&1&0&0&0&1&1&0&1&3&1&3&0&1&0&1&3\\
1&0&0&0&1&0&1&1&1&3&0&0&0&1&0&1&0&1&1&1&3&3&0&0&1&1&3\\
0&1&1&0&0&0&1&1&0&0&3&1&1&1&1&0&0&0&3&1&1&0&3&1&1&0&3\\
0&1&0&1&0&0&1&0&1&0&1&3&1&0&0&1&1&0&1&3&0&1&1&3&0&1&3\\
0&1&0&0&0&1&1&0&0&0&1&1&3&0&1&0&1&1&1&1&0&0&3&3&1&1&3\\
0&0&1&0&1&0&0&1&0&1&1&0&0&3&1&1&0&1&1&0&3&1&1&0&3&1&3\\
0&0&1&0&0&1&0&1&0&0&1&0&1&1&3&0&1&1&1&0&1&0&3&1&3&1&3\\
0&0&0&1&1&0&0&0&1&1&0&1&0&1&0&3&1&1&0&1&1&3&0&1&1&3&3\\
0&0&0&1&0&1&0&0&1&0&0&1&1&0&1&1&3&1&0&1&0&1&1&3&1&3&3\\
0&0&0&0&1&1&0&0&0&1&0&0&1&1&1&1&1&3&0&0&1&1&1&1&3&3&3\\
1&1&1&0&0&0&3&3&1&1&3&1&1&1&1&0&0&0&7&3&3&1&3&1&1&0&7\\
1&1&0&1&0&0&3&1&3&1&1&3&1&0&0&1&1&0&3&7&1&3&1&3&0&1&7\\
1&0&1&0&1&0&1&3&1&3&1&0&0&3&1&1&0&1&3&1&7&3&1&0&3&1&7\\
1&0&0&1&1&0&1&1&3&3&0&1&0&1&0&3&1&1&1&3&3&7&0&1&1&3&7\\
0&1&1&0&0&1&1&1&0&0&3&1&3&1&3&0&1&1&3&1&1&0&7&3&3&1&7\\
0&1&0&1&0&1&1&0&1&0&1&3&3&0&1&1&3&1&1&3&0&1&3&7&1&3&7\\
0&0&1&0&1&1&0&1&0&1&1&0&1&3&3&1&1&3&1&0&3&1&3&1&7&3&7\\
0&0&0&1&1&1&0&0&1&1&0&1&1&1&1&3&3&3&0&1&1&3&1&3&3&7&7\\
1&1&1&1&1&1&3&3&3&3&3&3&3&3&3&3&3&3&7&7&7&7&7&7&7&7&27\\ 
\end{array}
} \right] \; . $$
The inverse is 
$$ K^{-1} =  \left[
\scalemath{0.5}{
\begin{array}{ccccccccccccccccccccccccccc}
10&4&4&4&4&1&-4&-4&-4&-4&-2&-2&-1&-2&-1&-2&-1&-1&2&2&2&2&1&1&1&1&-1\\
4&10&4&4&1&4&-4&-2&-2&-1&-4&-4&-4&-1&-2&-1&-2&-1&2&2&1&1&2&2&1&1&-1\\
4&4&10&1&4&4&-2&-4&-1&-2&-4&-1&-2&-4&-4&-1&-1&-2&2&1&2&1&2&1&2&1&-1\\
4&4&1&10&4&4&-2&-1&-4&-2&-1&-4&-2&-1&-1&-4&-4&-2&1&2&1&2&1&2&1&2&-1\\
4&1&4&4&10&4&-1&-2&-2&-4&-1&-1&-1&-4&-2&-4&-2&-4&1&1&2&2&1&1&2&2&-1\\
1&4&4&4&4&10&-1&-1&-1&-1&-2&-2&-4&-2&-4&-2&-4&-4&1&1&1&1&2&2&2&2&-1\\
-4&-4&-2&-2&-1&-1&4&2&2&1&2&2&1&1&1&1&1&1&-2&-2&-1&-1&-1&-1&-1&-1&1\\
-4&-2&-4&-1&-2&-1&2&4&1&2&2&1&1&2&1&1&1&1&-2&-1&-2&-1&-1&-1&-1&-1&1\\
-4&-2&-1&-4&-2&-1&2&1&4&2&1&2&1&1&1&2&1&1&-1&-2&-1&-2&-1&-1&-1&-1&1\\
-4&-1&-2&-2&-4&-1&1&2&2&4&1&1&1&2&1&2&1&1&-1&-1&-2&-2&-1&-1&-1&-1&1\\
-2&-4&-4&-1&-1&-2&2&2&1&1&4&1&2&1&2&1&1&1&-2&-1&-1&-1&-2&-1&-1&-1&1\\
-2&-4&-1&-4&-1&-2&2&1&2&1&1&4&2&1&1&1&2&1&-1&-2&-1&-1&-1&-2&-1&-1&1\\
-1&-4&-2&-2&-1&-4&1&1&1&1&2&2&4&1&2&1&2&1&-1&-1&-1&-1&-2&-2&-1&-1&1\\
-2&-1&-4&-1&-4&-2&1&2&1&2&1&1&1&4&2&1&1&2&-1&-1&-2&-1&-1&-1&-2&-1&1\\
-1&-2&-4&-1&-2&-4&1&1&1&1&2&1&2&2&4&1&1&2&-1&-1&-1&-1&-2&-1&-2&-1&1\\
-2&-1&-1&-4&-4&-2&1&1&2&2&1&1&1&1&1&4&2&2&-1&-1&-1&-2&-1&-1&-1&-2&1\\
-1&-2&-1&-4&-2&-4&1&1&1&1&1&2&2&1&1&2&4&2&-1&-1&-1&-1&-1&-2&-1&-2&1\\
-1&-1&-2&-2&-4&-4&1&1&1&1&1&1&1&2&2&2&2&4&-1&-1&-1&-1&-1&-1&-2&-2&1\\
2&2&2&1&1&1&-2&-2&-1&-1&-2&-1&-1&-1&-1&-1&-1&-1&2&1&1&1&1&1&1&1&-1\\
2&2&1&2&1&1&-2&-1&-2&-1&-1&-2&-1&-1&-1&-1&-1&-1&1&2&1&1&1&1&1&1&-1\\
2&1&2&1&2&1&-1&-2&-1&-2&-1&-1&-1&-2&-1&-1&-1&-1&1&1&2&1&1&1&1&1&-1\\
2&1&1&2&2&1&-1&-1&-2&-2&-1&-1&-1&-1&-1&-2&-1&-1&1&1&1&2&1&1&1&1&-1\\
1&2&2&1&1&2&-1&-1&-1&-1&-2&-1&-2&-1&-2&-1&-1&-1&1&1&1&1&2&1&1&1&-1\\
1&2&1&2&1&2&-1&-1&-1&-1&-1&-2&-2&-1&-1&-1&-2&-1&1&1&1&1&1&2&1&1&-1\\
1&1&2&1&2&2&-1&-1&-1&-1&-1&-1&-1&-2&-2&-1&-1&-2&1&1&1&1&1&1&2&1&-1\\
1&1&1&2&2&2&-1&-1&-1&-1&-1&-1&-1&-1&-1&-2&-2&-2&1&1&1&1&1&1&1&2&-1\\
-1&-1&-1&-1&-1&-1&1&1&1&1&1&1&1&1&1&1&1&1&-1&-1&-1&-1&-1&-1&-1&-1&1\\
\end{array} 
} \right] \; . $$
The sum $\sum_{x,y} K^{-1}(x,y) = 27$. The eigenvalues of $K$
are in the interval $[0.0200446,49.8889]$. There are $7$ eigenvalues $1$ which 
corresponds to roots of the ``super charge" $Q=K-K^{-1}$. The eigenvalues
of 
$\sigma(Q) = \{-2\sqrt{7(45+8\sqrt{30})}$, $2\sqrt{7(45+8\sqrt{30})}$, 
$-8\sqrt{3}$, $-8 \sqrt{3}$, $-8\sqrt{3}$, $8\sqrt{3}$,$8\sqrt{3}$, 
$8\sqrt{3}$, $-2\sqrt{7(45-8\sqrt{30})}$, 
$2\sqrt{7(45-8 \sqrt{30})}$, $-2\sqrt{3}$, $-2\sqrt{3}$, $-2\sqrt{3}$,
$-2\sqrt{3}$, $-2\sqrt{3}$, $2\sqrt{3}$, 
$2\sqrt{3}$, $2\sqrt{3}$, $2\sqrt{3}$, $2\sqrt{3}$, $0,0,0,0,0,0,0\}$. 

\section{Illustrations}

\begin{figure}[!htpb]
\scalebox{0.12}{\includegraphics{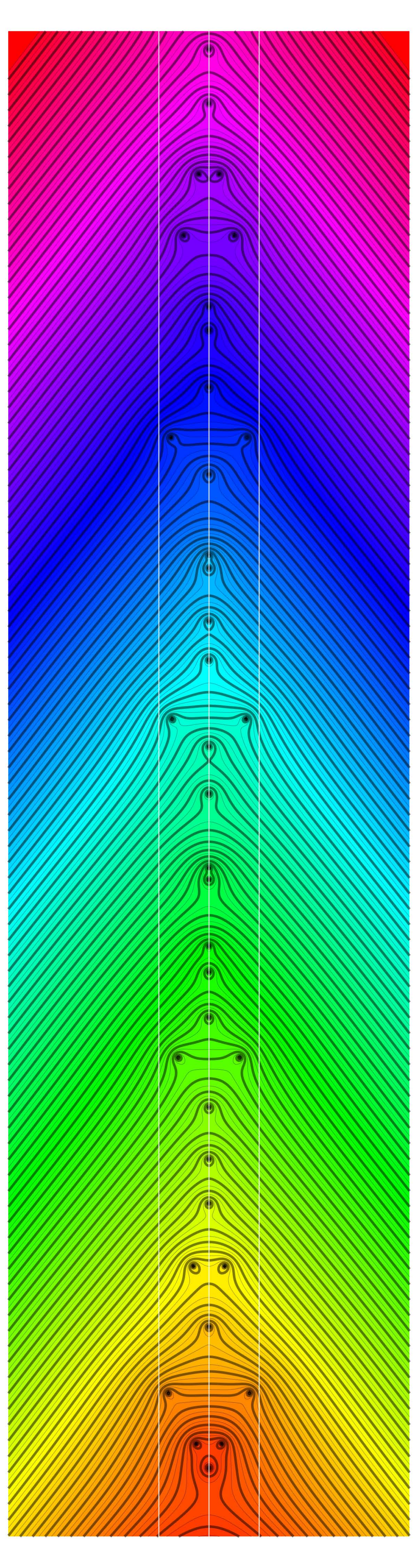}}
\label{zeta}
\caption{
Contour lines of the zeta function $\zeta(s)$ of a random complex.
In this case, the $f$-vector is $f= (10, 22, 13, 2)$. The level curves of 
$|\zeta(s)|$ are seen in the region
$\{ |{\rm Re}(s)| \leq 4$, $0\leq {\rm Im}(s) \leq 30 \}$.
The functional equation implies that the roots
are symmetric with respect to the imaginary axes. 
}
\end{figure}

\begin{figure}[!htpb]
\scalebox{0.12}{\includegraphics{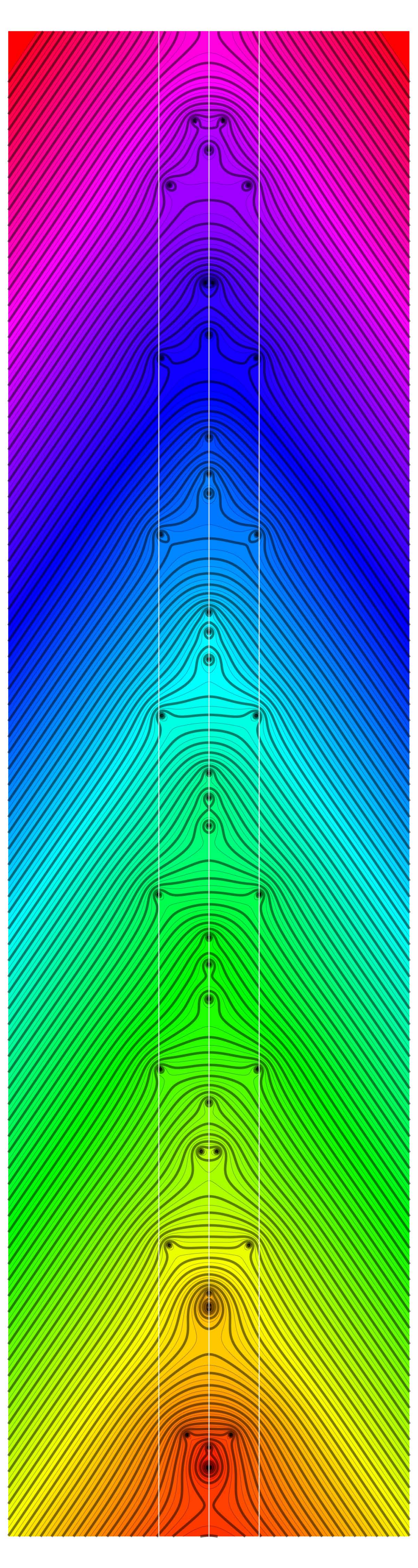}}
\label{zeta}
\caption{
Contour lines of the zeta function $\zeta(s)$ of the complete
complex $K_5$. 
Again, $|\zeta(s)|$ are seen in the region
$\{ |{\rm Re}(s)| \leq 4$, $0\leq {\rm Im}(s) \leq 30 \}$.
There are no results yet about the structure of the roots
even not in the special case of complete graphs. 
We know only the functional equation so far. 
}
\end{figure}

\begin{figure}[!htpb]
\scalebox{0.12}{\includegraphics{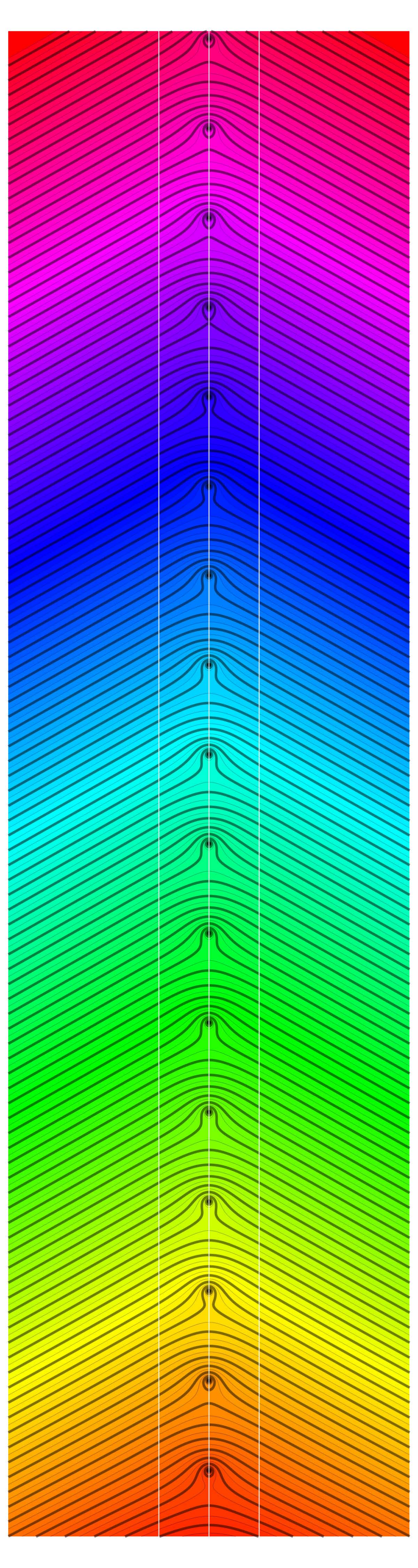}}
\label{zeta}
\caption{
Contour lines of the zeta function $\zeta(s)$ of the cyclic
complex $C_{40}$. The contours of $|\zeta(s)|$ are seen in the region
$\{ |{\rm Re}(s)| \leq 4$, $0\leq {\rm Im}(s) \leq 30 \}$.
In the pro-finite limit $n \to \infty$, the zeta function of a
one dimensional complex is explicit \cite{DyadicRiemann}. 
In two and higher dimensions we don't know the profinite limit. 
Also the universal density of state limit of $K$ is unexplored. 
}
\end{figure}

\begin{figure}[!htpb]
\scalebox{0.12}{\includegraphics{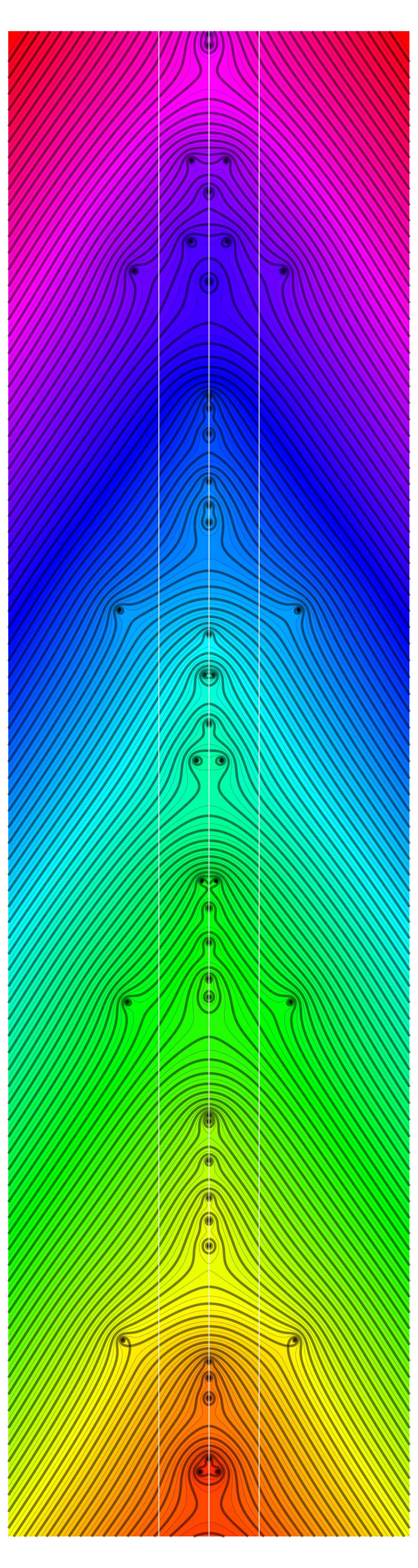}}
\label{zeta}
\caption{
Contour lines of the zeta function $\zeta(s)$ of the 3-sphere 
complex obtained by suspending the octahedron.
In this case, the $f$-vector is $f= (8, 24, 32, 16)$.
The function $|\zeta(s)|$ is again seen in the region
$\{ |{\rm Re}(s)| \leq 4$, $0\leq {\rm Im}(s) \leq 30 \}$.
}
\end{figure}

\begin{figure}[!htpb]
\scalebox{0.35}{\includegraphics{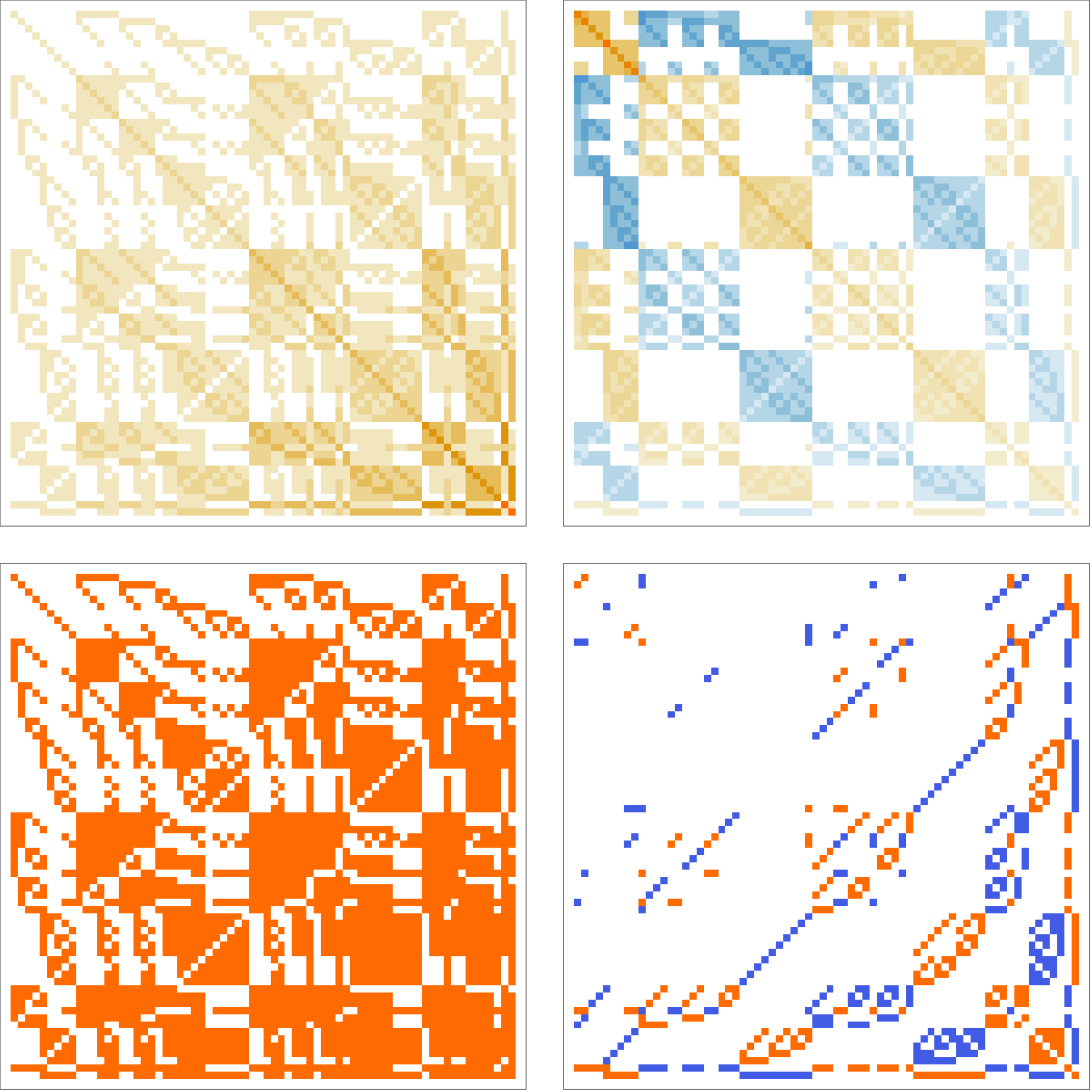}}
\label{matrixplot}
\caption{
The matrices $K,K^{-1}$ and as comparison, the matrices $L,L^{-1}$ 
are seen below. The complex $G$ is generated by 
$A=\{\{1,2,3,4,5\},\{5,6,7,8,9\},\{1,2,8,9\}\}$ and contains $70$ sets.
The $f$-vector is $(9, 24, 24, 11, 2)$. 
The $70 \times 70$ counting matrix $K$ has determinant $1$, the
connection matrix $L$ has determinant $-1$. There are 35 odd dimensional
sets. The matrix has $35$ positive eigenvalues, the matrix $K$ has $70$ 
positive eigenvalues in $[0.00868721,115.112]$. 
}
\end{figure}

\section{Remarks} 

\paragraph{}
In the one dimensional case, there appear relations between $L$ and $K$.
In the cyclic case for example $K-L$ is a direct sum of a $f_0 \times f_0$ zero matrix
$0$ with a $f_1 \times f_1$ constant diagonal matrix $2I$. In the one-dimensional
case, the kernel of $Q=K-K^{-1}$ and $H=L-L^{-1}$ are the same which makes the 
nullity of $Q$ topological in that case. We don't see yet whether it is possible 
to extract cohomology from the spectrum of $Q$ in general. 
See \cite{HearingEulerCharacteristic} for the case of the connection matrix $L$ in the
one dimensional case.

\paragraph{}
In the Barycentric limit, the density of states of $K$ converges to a measure
which only depends on the dimension \cite{KnillBarycentric2}. In the 
one-dimensional case, the limit is understood. 
While the limiting zeta function of the Hodge Laplacian is more
difficult to describe \cite{KnillZeta}, the Barycentric limiting case of $L$ or
$K$ is explicit \cite{DyadicRiemann}. As the matrices $K$ have more
spectral symmetry than $L$ it would be nice to understand the limiting root
structure of the zeta function in the Barycentric limit. 

\paragraph{}
The Riemann zeta function is the spectral function the circle $G=\mathbb{T}$.
The generator of translation $D=i \partial_x$ on $G$ has the eigenvalues $\mathcal{Z}$
with eigenfunctions $e^{i n x}$ to the eigenvalue $-n$. 
The Laplacian $H=D^2=-\Delta$ with eigenvalues $n^2$. One discards the singular harmonic 
case $\lambda=0$, and replaces $\sum_{n>0} (n^2)^{-s}$ with
$\sum_{n>0} n^{-s}$ so that rather than ${\rm Re}(s)=1$, the critical line is
${\rm Re}(s)=1/2$. Discarding $\lambda_k=0$ for Riemannian manifolds
allows to zeta regularize determinants. In the discrete case, when looking at the Dirac
operator, the zeta regularization is the pseudo-determinant of the matrix, the product of the
non-zero eigenvalues. The connection matrix and now the counting matrix case
are remarkable in that no regularization is needed. 

\paragraph{}
The counting Laplacian results for $K$ covered here combines with the connection Laplacian case
for $L$. Define the {\bf $f$-function} $f_G(t) = 1+\sum_{k=0}^d f_k t^{k+1}$, where $f_k$ is the 
number of $k$-dimensional simplices in $G$. Some results for Euler characteristic
generalize to the $f$-function. An example is {\bf parametrized Gauss-Bonnet} 
\cite{dehnsommervillegaussbonnet} 
telling that $f_G(t) = 1+ \sum_x F_{S(x)}(t)$, where $F_G(t)$ is the anti-derivative of $f_G(t)$. 
An other is the {\bf parametrized Poincar\'e-Hopf} \cite{parametrizedpoincarehopf} 
which assures that that for a locally 
injective function $g$ on the vertex set of a  finite simple graph, 
$f_G(t) = 1+t \sum_{x} f_{S_g(x)}(t)$, where $S_g(x) = \{ y \in S(x), g(y)<g(x) \}$ and
$S(x)$ is the set of vertices attached to $x$. There are exactly two unimodular cases among 
$L_t$: the case $t=-1$ leads to $L$ and the case $t=1$ which leads to $K$.

\paragraph{}
So, here is an announcement of the general case:
define $L_{t}(x,y) = (1-f_{W^-(x) \cap W^-(y)}(t))/t^{{\rm dim}(x \cap y)}$ which 
is rational in $t$. The inverse of $L_t$ is the Green function matrix 
$g_t(x,y) = \omega(x) \omega(y) (1-f_{W^+(x) \cap W^+(y)}(t))$.
The matrix $L_t$ has the determinant
$(-1)^{|G|}t^{f_G'(1)}$ and the total potential energy satisfies
$1-f_G(t)=\sum_{x,y \in G} g_t(x,y)$. The case $t=-1$ is the connection case, 
the case $t=1$ is the counting case $K=-L_{-1}$ discussed here as then 
${\rm det}(L_1)={\rm det}(-1)$ and ${\rm det}(K)=1$. 
We hope to be able to elaborate on this general case elsewhere. 

\vfill

\pagebreak

\section{Code}

\paragraph{}
The following Mathematica code generates the matrix $K$ and its inverse $K^{-1}$,
and the zeta function for a random complex according to the definitions and 
illustrates the results in examples. As usual, the code can be grabbed from 
the ArXiv. It should serve as pseudo code also: 

\begin{tiny}
\lstset{language=Mathematica} \lstset{frameround=fttt}
\begin{lstlisting}[frame=single]
Generate[A_]:=Delete[Union[Sort[Flatten[Map[Subsets,A],1]]],1];
R[n_,m_]:=Module[{A={},X=Range[n],k},Do[k:=1+Random[Integer,n-1];
  A=Append[A,Union[RandomChoice[X,k]]],{m}];Generate[A]];
G=R[6,9];n=Length[G]; G=Sort[G];                    w[x_]:=-(-1)^Length[x];
star[x_]:=Module[{u={}},Do[v=G[[k]];If[SubsetQ[v,x],u=Append[u,v]],{k,n}];u];
core[x_]:=Module[{u={}},Do[v=G[[k]];If[SubsetQ[x,v],u=Append[u,v]],{k,n}];u];
Wminus=Table[Intersection[core[G[[k]]],core[G[[l]]]],{k,n},{l,n}];
Wplus =Table[Intersection[star[G[[k]]],star[G[[l]]]],{k,n},{l,n}];
K = Table[                    Length[Wminus[[k,l]]],{k,n},{l,n}];
KI= Table[w[G[[k]]]*w[G[[l]]]*Length[Wplus[[k,l]]] ,{k,n},{l,n}];
EV = Sort[Eigenvalues[1.0*K]]; Clear[s]; ZetaFunction=Total[EV^(-s)];
CharPol=CoefficientList[CharacteristicPolynomial[K,s],s];
Print["Green Star formula: ",Simplify[K.KI==IdentityMatrix[n]]];
Print["Energy Theorem:     ",Total[Flatten[KI]]==Length[G]];
Print["Spectral Symmetry:  ",CharPol==(-1)^n*Reverse[CharPol]];    
\end{lstlisting}
\end{tiny}

\bibliographystyle{plain}

\end{document}